\newtheorem{theorem}{Theorem}[section]
\newtheorem{lem}[theorem]{Lemma}
\newtheorem{prop}[theorem]{Proposition}
\newtheorem{cor}[theorem]{Corollary}
\theoremstyle{definition}
\newtheorem{definition}[theorem]{Definition}
\newtheorem{remark}[theorem]{Remark}
\newtheorem{example}[theorem]{Example}
\numberwithin{equation}{section}
\newcommand{\Ass}{\operatorname{Ass}}
\newcommand{\Spec}{\operatorname{Spec}}
\newcommand{\Ext}{\operatorname{Ext}}
\newcommand{\BN}{\Bbb N}
\newcommand{\BZ}{\Bbb Z}
\newcommand{\fm}{\frak{m}}
\newcommand{\fp}{\frak{p}}
\newcommand{\fq}{\frak{q}}
\newcommand{\init}{\operatorname{in}}
\begin{document}

\title[CCM property and Lyubeznik numbers under Gr\"obner deformations]{Canonical Cohen-Macaulay property and Lyubeznik numbers under Gr\"obner deformations}
\author{Parvaneh Nadi}
\email{nadi$_{-}$p@aut.ac.ir}
\address{Department of Mathematics and Computer Science, Amirkabir University of Technology, 424 Hafez Av, Tehran, 1591634311, Iran.}
\author{Matteo Varbaro} 
\email{varbaro@dima.unige.it}
\address{Dipartimento di Matematica, Universit\`a di Genova, Italy} 
  \date{}
\maketitle
\begin{abstract}
In this note we draw some interesting consequences of the recent results on squarefree Gr\"obner degenerations obtained by Conca and the second author.
\end{abstract}
\section{Introduction}

Let $R=K[x_1,...,x_n]$ be a positively graded polynomial ring over a field $K$, where $x_i$ is homogeneous of degree $g_i\in\mathbb{N}_{>0}$, and $\mathfrak{m}=(x_1,\ldots ,x_n)$ denotes its homogeneous maximal ideal. Also denote the canonical module of $R$ by $\omega_{R} =R(-|g|)$, where $|g|=g_1+\ldots +g_n$.

\begin{definition}
A graded finitely generated $R$-module $M$ is called canonical  Cohen-Macaulay (CCM for short) if $\Ext^{n-\dim M}_R(M,\omega_R)$ is Cohen-Macaulay.
\end{definition}

This notion was introduced by Schenzel in \cite{S}, who proved in the same paper the following result that contributes to make it interesting: given a homogeneous prime ideal $I\subset R$, the ring $R/I$ is CCM if and only if it admits a birational Macaulayfication (that is a birational extension $R/I\subset A\subset Q(R/I)$ such that $A$ is a finitely generated Cohen-Macaulay $R/I$-module, where $Q(R/I)$ is the fraction field of $R/I$). In this case, furthermore, $A$ is the endomorphism ring of $\Ext^{n-\dim R/I}_R(R/I,\omega_R)$.

In this note, we will derive by the recent result obtained by Conca and the second author in \cite{C-V} the following: if a homogeneous ideal $I\subset R$ has a radical initial ideal $\init_{\prec}(I)$ for some monomial order $\prec$, then $R/I$ is CCM whenever $R/\init_{\prec}(I)$ is CCM. In fact we prove something more general, from which we can also infer that, in positive characteristic, under the same assumptions the Lyubeznik numbers of $R/I$ are bounded above from those of $R/\init_{\prec}(I)$. As a consequence of the latter result, we can infer that, also in characteristic 0 by reduction to positive characteristic, if $\init_{\prec}(I)$ is a radical monomial ideal the following are equivalent:

\begin{enumerate}
\item The  dual graph (a.k.a. Hochster-Huneke graph) of $R/I$ is connected.
\item The dual graph of $R/\init_{\prec}(I)$ is connected.
\end{enumerate}

Motivated by these results, in the last section we study the CCM property for Stanley-Reisner rings $K[\Delta]$. We show that $K[\Delta]$ is CCM whenever $\Delta$ is a simply connected 2-dimensional simplicial complex.

\section{CCM, Lyubeznik numbers and Gr\"obner deformations}

Throughout this section, let us fix a monomial order $\prec$ on $R$. We start with the following crucial lemma:

\begin{lem}\label{mainlemma}
Let $I$ be a homogeneous ideal of $R$ such that $\mathrm{in}_{\prec}(I)$ is radical. Then, for all $i,j,k\in\BZ$, we have:

\begin{center}
$\dim_K\Ext^i_R(\Ext_R^j(R/I,\omega_R),\omega_R)_k\leq \dim_K\Ext^i_R(\Ext_R^j(R/\init_{\prec}(I),\omega_R),\omega_R)_k$
\end{center}

\end{lem}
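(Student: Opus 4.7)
The plan is to use the classical one-parameter Gr\"obner degeneration as a flat family over $K[t]$, to invoke the main theorem of Conca--Varbaro \cite{C-V} in order to upgrade flatness from $A:=R[t]/\widetilde{I}$ to the Ext modules $\widetilde{E}^j:=\Ext^j_{R[t]}(A,\omega_{R[t]})$, and finally to re-run the flat-family analysis on the $\widetilde{E}^j$ to obtain the desired iterated inequality. Concretely, pick a weight $w\in\BN^n$ with $\init_w(I)=\init_\prec(I)$, form the $w$-homogenisation $\widetilde{I}\subset R[t]$, and grade $R[t]$ by $\deg x_i=g_i$ and $\deg t=0$, so that $A$ is $K[t]$-flat with $A/tA\cong R/\init_\prec(I)$ and $A\otimes_{K[t]}K(t)\cong (R/I)\otimes_K K(t)$. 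Setting $\omega_{R[t]}=R[t](-|g|)$, the change-of-rings formula yields $\Ext^j_{R[t]}(N,\omega_{R[t]})\cong\Ext^{j-1}_R(N,\omega_R)$ for any $R$-module $N$ (without any degree shift), and applying $\Hom_{R[t]}(-,\omega_{R[t]})$ to the sequence $0\to A\xrightarrow{t}A\to R/\init_\prec(I)\to 0$ produces, for every $j\in\BZ$,
\[
0\to \widetilde{E}^{j-1}/t\widetilde{E}^{j-1}\to \Ext^{j-1}_R(R/\init_\prec(I),\omega_R)\to (0:_{\widetilde{E}^j}t)\to 0.
\]

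The crucial step is to prove that every $\widetilde{E}^j$ is $t$-torsion-free. Flat base change along $K[t]\to K(t)$ identifies the $K[t]$-rank of $\widetilde{E}^j$ in $R$-degree $k$ with $\dim_K\Ext^j_R(R/I,\omega_R)_k$; writing $\tau^j(k):=\dim_K(0:_{\widetilde{E}^j}t)_k$, the displayed sequence therefore translates into
\[
\dim_K\Ext^{j-1}_R(R/\init_\prec(I),\omega_R)_k=\dim_K\Ext^{j-1}_R(R/I,\omega_R)_k+\tau^{j-1}(k)+\tau^j(k).
\]
The main theorem of \cite{C-V} asserts that $R/I$ and $R/\init_\prec(I)$ share the same graded Hilbert function of local cohomology at $\fm$, which is equivalent, by graded local duality, to the equality of the two Ext Hilbert functions appearing on the right hand side. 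This forces the non-negative sum $\tau^{j-1}(k)+\tau^j(k)$ to vanish, so $\tau^j(k)=0$ for all $j,k$, and each $\widetilde{E}^j$ is $t$-torsion-free. In particular, the previous sequence collapses to the isomorphism $\widetilde{E}^j/t\widetilde{E}^j\cong\Ext^j_R(R/\init_\prec(I),\omega_R)$.

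Having secured torsion-freeness, the idea is to re-run the exact same construction with $\widetilde{E}^j$ playing the r\^ole of $A$. Set $\widetilde{G}^i:=\Ext^i_{R[t]}(\widetilde{E}^j,\omega_{R[t]})$ and apply $\Hom_{R[t]}(-,\omega_{R[t]})$ to the now-exact sequence $0\to\widetilde{E}^j\xrightarrow{t}\widetilde{E}^j\to\Ext^j_R(R/\init_\prec(I),\omega_R)\to 0$; the same bookkeeping gives
\[
\dim_K\Ext^{i-1}_R(\Ext^j_R(R/\init_\prec(I),\omega_R),\omega_R)_k=\rank_{K[t]}(\widetilde{G}^{i-1})_k+\widetilde{\tau}^{i-1}(k)+\widetilde{\tau}^i(k)
\]
for analogously defined torsion dimensions $\widetilde{\tau}^i(k)\geq0$. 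A second flat base change to $K(t)$ identifies the rank on the right with $\dim_K\Ext^{i-1}_R(\Ext^j_R(R/I,\omega_R),\omega_R)_k$, so non-negativity of the $\widetilde{\tau}$'s yields the inequality stated in the lemma after the cosmetic reindexing $i-1\mapsto i$. The principal obstacle is the torsion-freeness step, which genuinely requires the \emph{equality} (and not merely the trivial upper-semicontinuity inequality) of Ext Hilbert functions furnished by \cite{C-V}.
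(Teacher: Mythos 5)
Your proof is correct and follows the same overall strategy as the paper: pass to the one-parameter degeneration $A=R[t]/\hom_w(I)$ over $K[t]$, establish that the ``inner'' Ext module $\widetilde{E}^j=\Ext^j_{R[t]}(A,\omega_{R[t]})$ is $K[t]$-flat, and then run a semicontinuity argument on the ``outer'' Ext. The technical execution differs in two places, both of which are legitimate variants. First, you derive torsion-freeness of $\widetilde{E}^j$ from the equality of graded Hilbert functions of local cohomology in \cite[Theorem 1.3]{C-V} together with graded local duality; the paper instead cites \cite[Proposition 2.4]{C-V}, which asserts the $K[t]$-flatness of $\Ext^j_P(S,P)$ directly. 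These are essentially equivalent statements, so this is a cosmetic difference. Second, and more substantively, for the outer Ext you compare the special fibre at $t=0$ with the generic fibre over $K(t)$ via flat base change, whereas the paper compares the fibres at $t=0$ and at $t=1$. The paper's $t$-versus-$(t-1)$ comparison needs the observation that the torsion of each graded piece $E^{i,j}_k$ is $t$-power torsion, which it gets by equipping $E^{i,j}_k$ with the extra $\BZ$-grading coming from the $w$-weight (with $\deg t=1$); your route replaces this bigrading observation with the elementary identity $\dim_K M/tM=\rank_{K[t]}M+\dim_K(0:_M t)$ for finitely generated $K[t]$-modules, which is grading-free, at the modest cost of invoking flat base change for Ext along $K[t]\to K(t)$. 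Both proofs are sound; yours trades the graded structure theorem for a base-change argument.
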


\begin{proof}
Let $w=(w_{1},...,w_{n}) \in \mathbb{N}^{n}$ be a weight such that $\mathrm{in}_{w}(I)=\mathrm{in}_{\prec}(I)$. Let $t$ be a new indeterminate over $R$. Set $P=R[t]$ and $S=P/\hom_{w}(I)$.  By providing $P$ with the graded structure given by $\deg(x_i)=g_i$ and $\deg(t)=0$, $\hom_w(I)$ is homogeneous. If $x\in \{t,t-1\}$, apply the functor $\Ext_P^i(\Ext_P^j(S,P),-)$ to the short exact sequence
\[
0 \rightarrow P \xrightarrow{\cdot x} P \rightarrow P/xP \rightarrow 0 
\]
getting the short exact sequences
\begin{center}
$0\rightarrow \mathrm{Coker} \mu_{x}^{i-1,j} \rightarrow \mathrm{Ext}^{i}_{P}(\mathrm{Ext}^{j}_{P}(S,P),P/xP) \rightarrow \mathrm{Ker} \mu_{x}^{i,j} \rightarrow 0.$
\end{center}
where $\mu_x^{i,j}$ is the multiplication by $x$ on $\Ext_P^i(\Ext_P^j(S,P),P)$. So, for all $k\in \BZ$ we have exact sequences of $K$-vector spaces: 
\begin{center}
$0\rightarrow [\mathrm{Coker} \mu_{x}^{i-1,j}]_k \rightarrow \mathrm{Ext}^{i}_{P}(\mathrm{Ext}^{j}_{P}(S,P),P/xP)_k \rightarrow [\mathrm{Ker} \mu_{x}^{i,j}]_k \rightarrow 0.$
\end{center}
Since $E^{i,j}_k=\Ext_P^i(\Ext_P^j(S,P),P)_k$ is a finitely generated graded (w.r.t. the standard grading) $K[t]$-module, we can write $E^{i,j}_k=F^{i,j}_k+T^{i,j}_k$ where $F^{i,j}_k=K[t]^{f^{i,j}_k}$ and $T^{i,j}_k=\bigoplus_ {r=1}^{g^{i,j}_k}K[t]/(t^{d_r})$ with $d_r\geq 1$. Therefore we have:
\[\dim_K[\mathrm{Coker} \mu_{t-1}^{i-1,j}]_k=f^{i-1,j}_k\leq f^{i-1,j}_k+g^{i-1,j}_k=\dim_K[\mathrm{Coker} \mu_{t}^{i-1,j}]_k\] 
and 
\[\dim_K[\mathrm{Ker} \mu_{t-1}^{i,j}]_k=0\leq g^{i,j}_k=\dim_K[\mathrm{Ker} \mu_{t}^{i-1,j}]_k.\]
So $\dim_K \mathrm{Ext}^{i}_{P}(\mathrm{Ext}^{j}_{P}(S,P),P/(t-1)P)_k\leq \dim_K\mathrm{Ext}^{i}_{P}(\mathrm{Ext}^{j}_{P}(S,P),P/tP)_k$. 

Note that, by using \cite[Proposition 1.1.5]{B-H} one can infer the following: if $A$ is a ring, $M$ and $N$ are $A$-modules, and $a\in \mathrm{Ann}(N)$ is $A$-regular and $M$-regular, then
\[\Ext^i_A(M,N)\cong \Ext^i_{A/aA}(M/aM,N) \ \ \forall \ i\in \BN.\] 
Since by \cite[Proposition 2.4]{C-V} $\mathrm{Ext}^{j}_{P}(S,P)$ is a flat $K[t]$-module, the multiplication by $x$ on it is injective: that is, $x$ is $\mathrm{Ext}^{j}_{P}(S,P)$-regular. Therefore we have:
\[\mathrm{Ext}^{i}_{P}(\mathrm{Ext}^{j}_{P}(S,P),P/xP)\cong \mathrm{Ext}^{i}_{P/xP}(\mathrm{Ext}^{j}_{P}(S,P)/x\mathrm{Ext}^{j}_{P}(S,P),P/xP).\]
Again because the multiplication by $x$ is injective on $\mathrm{Ext}^{j}_{P}(S,P)$ and by the property mentioned above, we have
\[\mathrm{Ext}^{j}_{P}(S,P)/x\mathrm{Ext}^{j}_{P}(S,P)\cong \mathrm{Ext}^{j}_{P}(S,P/xP)\cong \mathrm{Ext}^{j}_{P/xP}(S/xS,P/xP) .\]
Putting all together we get:
\begin{eqnarray*}
\dim_K \mathrm{Ext}^{i}_{P/(t-1)P}(\mathrm{Ext}^{j}_{P/(t-1)P}(S/(t-1)S,P/(t-1)P),P/(t-1)P)_k\leq \\
 \dim_K\mathrm{Ext}^{i}_{P/tP}(\mathrm{Ext}^{j}_{P/tP}(S/tS,P/tP),P/tP)_k,
 \end{eqnarray*}
 that, because $\omega_R\cong R(-|g|)$, is what we wanted:
 \[\dim_K\mathrm{Ext}^{i}_{R}(\mathrm{Ext}^{j}_{R}(R/I,R),R)_k\leq \dim_K\mathrm{Ext}^{i}_{R}(\mathrm{Ext}^{j}_{R}(R/\init_{\prec}(I),R),R)_k.\]
\end{proof}

\begin{cor}\label{c:CCM}
Let $I$ be a homogeneous ideal of $R$ such that $\mathrm{in}_{\prec}(I)$ is radical. Then, $R/I$ is canonical Cohen-Macaulay whenever $R/\init_{\prec}(I)$ is so.
\end{cor}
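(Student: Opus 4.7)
The plan is to combine Lemma~\ref{mainlemma} (taking $j$ equal to the grade of $R/I$) with the standard local-duality characterization of the Cohen-Macaulay property. Let $d = \dim R/I$ and $c = n - d$. Since Gr\"obner degenerations preserve Krull dimension, $\dim R/\init_{\prec}(I) = d$ as well. Set
\[M := \Ext^c_R(R/I,\omega_R) \quad\text{and}\quad N := \Ext^c_R(R/\init_{\prec}(I),\omega_R).\]
I would first check that $\dim M = \dim N = d$: for any minimal prime $\fp$ of $I$ with $\dim R/\fp = d$, one has $\Ht \fp = c$, so $(R/I)_{\fp}$ is Artinian and thus $M_{\fp} = \Ext^c_{R_\fp}((R/I)_\fp,\omega_{R_\fp})\neq 0$; combined with the general upper bound $\dim\Ext^c_R(-,\omega_R)\leq n-c$, this gives $\dim M = d$. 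The same argument applies to $N$.

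Next, I would invoke the graded local-duality criterion: since $R$ is Cohen-Macaulay with canonical module $\omega_R$, a finitely generated graded $R$-module $L$ is Cohen-Macaulay if and only if $\Ext^i_R(L,\omega_R) = 0$ for every $i \neq n-\dim L$. Applied to $N$, the hypothesis that $R/\init_{\prec}(I)$ is CCM says precisely that $N$ is Cohen-Macaulay, so
\[\Ext^i_R(N,\omega_R) = 0 \qquad \text{for all } i\neq c.\]

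Finally, I would specialize Lemma~\ref{mainlemma} to $j=c$ to obtain, for every $i\in \BZ$ and every $k\in \BZ$,
\[\dim_K \Ext^i_R(M,\omega_R)_k \leq \dim_K \Ext^i_R(N,\omega_R)_k.\]
Both sides are graded components of finitely generated graded $R$-modules, so vanishing in every degree is equivalent to vanishing of the whole module. Thus the vanishing just established for $N$ forces $\Ext^i_R(M,\omega_R) = 0$ for all $i\neq c$. Because $\dim M = d$, i.e. $c = n-\dim M$, the same local-duality criterion applied to $M$ shows that $M$ is Cohen-Macaulay, which is exactly the statement that $R/I$ is CCM.

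The only substantive point beyond bookkeeping is the verification that $\dim M = d$ (so that $c$ is the index appearing in the CM criterion for $M$, matching the one used for $N$); after that, the result is a straightforward transfer of vanishing through the dimension inequality provided by the lemma.
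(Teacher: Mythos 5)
Your proof is correct and follows essentially the same route as the paper: both pass through the local-duality characterization of the Cohen--Macaulay property for $\Ext^{n-\dim R/J}_R(R/J,\omega_R)$ and then transfer the vanishing via Lemma~\ref{mainlemma} with $j=n-\dim R/I$. The only difference is that you spell out, via localization at a minimal prime of maximal dimension, why $M=\Ext^c_R(R/I,\omega_R)$ has dimension exactly $d$ (so that $c$ is the right cohomological index in the CM criterion), a point the paper leaves implicit.
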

\begin{proof}
For a homogeneous ideal $J\subset R$, $R/J$ is CCM if and only if 
\[\Ext^{n-i}_R(\Ext_R^{n-\dim R/J}(R/J,\omega_R),\omega_R)=0 \ \ \forall \ i<\dim R/J,\] so the result follows from Lemma \ref{mainlemma}.
\end{proof}

\begin{remark}\label{r:gin}
Corollary \ref{c:CCM} fails without assuming that $\init_{\prec}(I)$ is radical. In fact, if $\prec$ is a degrevlex monomial order and $I$ is in generic coordinates, by \cite[Theorem 2.2]{H-S} $R/\init_{\prec}(I)$ is sequentially Cohen-Macaulay, thus CCM (for example see \cite[Theorem 1.4]{H-S}). However, it is plenty of homogeneous ideals $I$ such that $R/I$ is not CCM.
\end{remark}

We do not know whether the implication of Corollary \ref{c:CCM} can be reversed. 
Without assuming that $\init_{\prec}(I)$ is radical, we already noticed that Corollary \ref{c:CCM} fails in Remark \ref{r:gin}. The following example shows that in general $R/I$ CCM but $R/\init_{\prec}(I)$ not CCM can also happen:

\begin{example}
Let $R=K[x_1,...,x_9]$ and 
\[I=(x_1^3+x_2^3,x_5^2x_9+x_4^2x_8,x_5^3x_7+x_6^3x_9,x_7^2x_1+
x_6^2x_5,x_3x_9-x_4x_8).\]
Since $I$ is a complete intersection, $R/I$ is CCM. However one can check that, if $\prec$ is the lexicographic order extending $x_1>\ldots >x_9$, $R/\init_{\prec}(I)$ is not CCM.
\end{example}

\subsection{Lyubeznik numbers and connectedness}

Let $I\subset R=K[x_1,\ldots ,x_n]$. In \cite{Lyu} Lyubeznik introduced the following invariants of $A=R/I$:

\[\lambda_{i,j}(A) =
\mathrm{dim}_{K}\mathrm{Ext}^{i}_{R}(K,H_{I}^{n-j}(R)) \ \ \forall \ i,j\in \BN .\]

It turns out that these numbers, later named {\it Lyubeznik numbers}, depend only on $A$, $i$ and $j$, in the sense that if $A\cong S/J$ where $J\subset S=K[y_1,\ldots ,y_m]$,
\[\lambda_{i,j}(A) =
\mathrm{dim}_{K}\mathrm{Ext}^{i}_{S}(K,H_{J}^{m-j}(S)) \ \ \forall \ i,j\in \BN .\]
Also, $\lambda_{i,j}(A)=0$ whenever $i>j$ or $j>\dim A$, and $\lambda_{d,d}(A)$ is the number of connected components of the {\it dual graph} (also known as the {\it Hochster-Huneke graph}) of $A\otimes_K\overline{K}$, \cite{Zh}. (We recall that the dual graph of a Noetherian ring  $A$ of dimension $d$ is the graph with the minimal primes of $A$ as vertices and such that $\{\fp,\fq\}$ is an edge if and only if $\dim A/(\fp+\fq)=d-1$). We will refer to the upper triangular matrix $\Lambda(A)=(\lambda_{i,j}(A))$ of size $(\dim A+1)\times (\dim A+1)$ as the {\it Lyubeznik table} of $A$. By {\it trivial Lyubeznik table} we mean that $\lambda_{\dim A,\dim A}(A)=1$ and $\lambda_{i,j}(A)=0$ otherwise.

\begin{cor}\label{c:lyu}
Let $I$ be a homogeneous ideal of $R$ such that $\mathrm{in}_{\prec}(I)$ is radical. If $K$ has positive characteristic,
\[\lambda_{i,j}(R/I)\leq \lambda_{i,j}(R/\init_{\prec}(I)) \ \ \forall \ i,j\in\BN.\]
\end{cor}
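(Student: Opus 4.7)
The plan is to adapt the argument of Lemma~\ref{mainlemma}, replacing the iterated Ext module $\Ext^j_P(S,P)$ with the local cohomology module $H := H_J^{n-j}(P)$. As in Lemma~\ref{mainlemma}, take $P = R[t]$ with $\deg t = 0$, let $J = \hom_w(I) \subset P$ for a weight $w$ with $\init_w(I) = \init_\prec(I)$, and consider the graded $K[t]$-modules
\[
E^{i,j}_k := \Ext^i_P(K[t], H)_k.
\]
Following Lemma~\ref{mainlemma}, one would apply $\Ext^i_P(K[t], -)$ to the short exact sequence $0 \to H \xrightarrow{\cdot x} H \to H/xH \to 0$ (which requires $x$ to be $H$-regular) for $x \in \{t, t-1\}$, extract the short exact sequence
\[
0 \to \mathrm{Coker}\,\mu_x^{i-1,j} \to \Ext^i_P(K[t], H/xH) \to \mathrm{Ker}\,\mu_x^{i,j} \to 0
\]
where $\mu_x^{i,j}$ denotes multiplication by $x$ on $E^{i,j}$, and carry out the free-plus-torsion dimension count over the PID $K[t]$ in each graded degree $k$ exactly as in that proof.

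Three ingredients are needed: (a) each $E^{i,j}_k$ is a finitely generated graded $K[t]$-module, so that the decomposition $E^{i,j}_k \cong K[t]^{f^{i,j}_k} \oplus \bigoplus_r K[t]/(t^{d_r})$ applies; (b) both $t$ and $t-1$ are $H_J^{n-j}(P)$- and $H_J^{n-j+1}(P)$-regular, playing the role of the $K[t]$-flatness of $\Ext^j_P(S,P)$ established in \cite[Proposition~2.4]{C-V}; (c) the specialization identifications $H/(t-1)H \cong H_I^{n-j}(R)$ and $H/tH \cong H_{\init_\prec(I)}^{n-j}(R)$, which follow from (b) via the long exact sequence of local cohomology associated to $0 \to P \xrightarrow{\cdot x} P \to R \to 0$ together with the fact that local cohomology of an $R$-module over $P$ with respect to $J$ agrees with its local cohomology over $R$ with respect to $JR$. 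All three ingredients should follow in characteristic $p > 0$ from the $F$-finite $F$-module structure on $H$ (Lyubeznik), which yields finite Bass numbers of $H$ and the required flatness/specialization behavior for the Gr\"obner family.

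Once these are in place, the change-of-rings formula \cite[Proposition~1.1.5]{B-H} identifies $\Ext^i_P(K[t], H/xH)$ with $\Ext^i_R(K, H_I^{n-j}(R))$ for $x = t-1$ and with $\Ext^i_R(K, H_{\init_\prec(I)}^{n-j}(R))$ for $x = t$. The same dimension bookkeeping as in Lemma~\ref{mainlemma} gives
\[
\dim_K \Ext^i_R(K, H_I^{n-j}(R))_k \leq \dim_K \Ext^i_R(K, H_{\init_\prec(I)}^{n-j}(R))_k \quad \forall\ k \in \BZ,
\]
and summing over $k$ yields $\lambda_{i,j}(R/I) \leq \lambda_{i,j}(R/\init_\prec(I))$. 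The main obstacle is to verify (a) and (b) in spite of $H$ not being finitely generated over $P$; this is precisely where the characteristic $p > 0$ hypothesis is indispensable, since the $F$-module techniques that deliver the required finiteness and regularity for the local cohomology in the Gr\"obner family are unavailable in characteristic $0$.
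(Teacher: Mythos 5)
Your proposal takes a genuinely different route from the paper's and, as you yourself flag, leaves a real gap that the hand-wave towards $F$-module theory does not close.

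The paper's proof is short and avoids ever touching the local cohomology modules $H_J^{n-j}(P)$ of the Gr\"obner family. It combines three ingredients: (i) Zhang's theorem \cite[Theorem 1.2]{Z} that, in positive characteristic, $\lambda_{i,j}(R/J)$ is the dimension of the Frobenius-stable part of $\Ext^{n-i}_R(\Ext^{n-j}_R(R/J,\omega_R),\omega_R)_0$, so in particular $\lambda_{i,j}(R/J)$ is bounded above by the full $K$-dimension of that degree-zero piece; (ii) Yanagawa's result \cite[Corollary 3.10]{Y} that for a \emph{radical monomial} ideal the Lyubeznik number \emph{equals} that full dimension; and (iii) Lemma~\ref{mainlemma} applied with $k=0$. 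Chaining these gives $\lambda_{i,j}(R/I)\le \dim_K\Ext^{n-i}_R(\Ext^{n-j}_R(R/I,\omega_R),\omega_R)_0\le \dim_K\Ext^{n-i}_R(\Ext^{n-j}_R(R/\init_\prec(I),\omega_R),\omega_R)_0=\lambda_{i,j}(R/\init_\prec(I))$. Note that both the positive-characteristic hypothesis and the radicality of $\init_\prec(I)$ are used precisely here, and nowhere else.

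Your plan instead tries to redo the argument of Lemma~\ref{mainlemma} with $H=H_J^{n-j}(P)$ in the role of $\Ext^j_P(S,P)$. The difficulty is exactly where you point it: both (a) the finite generation of $\Ext^i_P(K[t],H)_k$ over $K[t]$ and (b) the $t$- and $(t-1)$-regularity of $H_J^{n-j}(P)$ and $H_J^{n-j+1}(P)$ must be proved, and neither is a formal consequence of Lyubeznik's $F$-finite $F$-module theory. What Lyubeznik's theory gives is finiteness of the Bass numbers of $H$ at each individual prime (e.g.\ finiteness of $\Ext^i_{P_{\mathfrak p}}(\kappa(\mathfrak p),H_{\mathfrak p})$), which controls the rank and the torsion length of $\Ext^i_P(K[t],H)$ locally but does not by itself yield (a), and says nothing at all about (b). Item (b) is the crux: the $K[t]$-flatness established in \cite[Proposition 2.4]{C-V} is for the finitely generated modules $\Ext^j_P(P/J,P)$, and there is no local-duality bridge from $\Ext^j_P(P/J,P)$ to $H_J^{n-j}(P)$ (local duality concerns $H_{\mathfrak m}$, not $H_J$). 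I see no reason why $H_J^{n-j}(P)$ should be $t$-torsion-free under the hypotheses, and the radicality of $\init_\prec(I)$ never enters your argument in a way that would deliver it. Until (a) and (b) are actually proved, the argument does not go through, so the proposal as written is incomplete.
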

\begin{proof}
By \cite[Theorem 1.2]{Z}, if $J\subset R$ is a homogeneous ideal,
\[\lambda_{i,j}(R/J)=\mathrm{dim}_{K}(\mathrm{Ext}^{n-i}_{R}(\mathrm{Ext}^{n-j}_{R}(R/J,\omega_{R}),\omega_{R})_{0})_{s},\]
where the subscript $(-)_s$ stands for the stable part under the natural Frobenius action. In particular
\[\lambda_{i,j}(R/J)\leq\mathrm{dim}_{K}\mathrm{Ext}^{n-i}_{R}(\mathrm{Ext}^{n-j}_{R}(R/J,\omega_{R}),\omega_{R})_{0}.\]
On the other hand, if $J\subset R$ is a radical monomial ideal, Yanagawa proved in \cite[Corollary 3.10]{Y} (independently of the characteristic of $K$) that:
\[\lambda_{i,j}(R/J)=\mathrm{dim}_{K}\mathrm{Ext}^{n-i}_{R}(\mathrm{Ext}^{n-j}_{R}(R/J,\omega_{R}),\omega_{R})_{0}.\]
So the result follows from Lemma \ref{mainlemma}.
\end{proof}
The following two examples show that Corolarry \ref{c:lyu} is false without assuming both that $\mathrm{in}_{\prec}(I)$ is radical and that $K$ has positive characteristic:

\begin{example}\cite[Example 4.11]{D-G-N}
Let $R=K[x_1,...,x_6]$ and $\mathrm{char}(K)=5$. Let 
\begin{eqnarray*}
I=(x_4^3+x_5^{3}+x_6^{3},x_4^{2}x_1+x_5^{2}x_2+x_6^{2}x_3,x_1^{2}x_4+x_2^{2}x_5+x_3^{2}x_6, \\
x_1^{3}+x_2^{3}+x_3^{3},x_5x_3-x_6x_2,x_6x_1-x_4x_3,x_4x_2-x_5x_1).
\end{eqnarray*}
 
Then
\[
\Lambda(R/I)=\begin{bmatrix}
 0&0&1 &0 \\
 & 0& 0 &0\\
 & &0&1\\
&& &1\\
\end{bmatrix}.
\]

If $\prec$ is the degree reverse lexicographic term order extending $x_1>\ldots >x_6$ one has:
\[\mathrm{in}_{\prec}(I)=(x_3x_5,x_3x_4,x_2x_4,x_4^{3},x_1x_4^{2},x_1^{2}x_4,x_1^{3}).\]
One can check that $R/\mathrm{in}_{\prec}(I)$ has a trivial Lyubeznik table.
\end{example}

\begin{example}
Let $K$ be a field of characteristic $0$ and $R=K[x_1,...,x_6]$. Let $I=(x_1x_5-x_2x_4,x_1x_6-x_3x_4,x_2x_6-x_3x_5)$. By \cite[Example 2.2]{Al-Y},
Lyubeznik table of $R/I$ is 
\[
\Lambda(R/I)=\begin{bmatrix}
 0&0&0&1 &0 \\
& 0& 0& 0 &0\\
& & 0&0&1\\
&&& 0&0\\
&&&&1\\
\end{bmatrix}.
\]
If $\prec$ is the degree reverse lexicographic term order extending $x_1>\ldots >x_6$ we have
\[\mathrm{in}_{\prec}(I)=(x_2x_4,x_3x_4,x_3x_5).\] 
So $\init_{\prec}(I)$ is a radical monomial ideal, however $\Lambda(R/\init_{\prec}(I))$ is trivial.
\end{example}

In Corollary \ref{c:lyu} we have an equality when $R/I$ is generalized Cohen-Macaulay:

\begin{cor}
Let $I$ be a homogeneous ideal of $R$ such that $\mathrm{in}_{\prec}(I)$ is radical. If $K$ has positive characteristic and $R/I$ is generalized Cohen-Macaulay,
\[\lambda_{i,j}(R/I)= \lambda_{i,j}(R/\init_{\prec}(I)) \ \ \forall \ i,j\in\BN.\]
\end{cor}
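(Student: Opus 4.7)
The plan is to augment the inequality $\lambda_{i,j}(R/I)\leq\lambda_{i,j}(R/\init_\prec(I))$ of Corollary~\ref{c:lyu} with a matching reverse inequality under the generalized Cohen-Macaulay hypothesis. Let $d=\dim R/I$ and, for a graded quotient $A$ of $R$, set
\[
D^{i,j}(A):=\Ext^{n-i}_R(\Ext^{n-j}_R(A,\omega_R),\omega_R),
\]
so that Zhang gives $\lambda_{i,j}(A)=\dim_K(D^{i,j}(A)_0)_s$, while Yanagawa gives $\lambda_{i,j}(A)=\dim_K D^{i,j}(A)_0$ whenever $A=R/J$ with $J$ a radical monomial ideal.

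First I would show that $R/\init_\prec(I)$ is also generalized Cohen-Macaulay: the flatness of $\Ext^j_P(S,P)$ over $K[t]$ used in the proof of Lemma~\ref{mainlemma} gives the graded Hilbert function equality $\dim_K\Ext^{n-j}_R(R/I,\omega_R)_k=\dim_K\Ext^{n-j}_R(R/\init_\prec(I),\omega_R)_k$, so finite length for $j<d$ is transferred. Next, for $j<d$, the finite length of $\Ext^{n-j}_R(R/I,\omega_R)$ forces $D^{i,j}(R/I)=0$ for $i\neq 0$, while graded local duality identifies $D^{0,j}(R/I)$ with $H^j_\fm(R/I)$; combining this with the Hilbert function preservation yields $\dim_K D^{i,j}(R/I)_0=\dim_K D^{i,j}(R/\init_\prec(I))_0$ for all $i$ and all $j<d$. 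For the remaining range $j=d$, a parallel analysis is required: the generalized Cohen-Macaulay hypothesis makes $\omega_{R/I}$ Cohen-Macaulay on the punctured spectrum, so $\Ext^{n-i}_R(\omega_{R/I},\omega_R)$ is of finite length for $i<d$, and its degree-zero Hilbert function can be matched to the analogous one for $R/\init_\prec(I)$ through the flat Gr\"obner family.

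The remaining step is to upgrade the resulting dimensional equality to an equality of Lyubeznik numbers. Since Yanagawa already gives $\lambda_{i,j}(R/\init_\prec(I))=\dim_K D^{i,j}(R/\init_\prec(I))_0$, it suffices to prove $\lambda_{i,j}(R/I)=\dim_K D^{i,j}(R/I)_0$, that is, that the Frobenius-stable subspace fills the whole degree-zero part of $D^{i,j}(R/I)$. My approach is a deformation argument inside the Conca--Varbaro flat family over $K[t]$: the Frobenius-stable subspace fills the whole degree-zero piece at $t=0$ (by Yanagawa applied to the monomial fiber), and a semicontinuity argument for the stable subspace, coupled with the preservation of $\dim_K D^{i,j}(\cdot)_0$ across the family, should transport the equality to $t=1$.

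The main obstacles are precisely the Frobenius-stability step and the $j=d$ case of the dimensional equality; both rely on the generalized Cohen-Macaulay hypothesis being used to ensure that the double $\Ext$ modules deform well in the Gr\"obner family and that the Frobenius-stable subspace behaves semicontinuously in it.
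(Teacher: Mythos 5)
Your proposal diverges sharply from the paper's argument and, as written, has a genuine gap precisely at the step you flag as an obstacle: showing that the Frobenius-stable subspace of $D^{i,j}(R/I)_0$ is all of $D^{i,j}(R/I)_0$. You gesture at ``a semicontinuity argument for the stable subspace'' across the Conca--Varbaro family, but Frobenius acts $p$-linearly rather than linearly, there is no natural Frobenius action on the $K[t]$-family that specializes compatibly at $t=0$ and $t=1$, and in general the dimension of the stable part of a $p$-linear endomorphism is not semicontinuous in flat families. Without a concrete mechanism here the argument does not close, and this is exactly the hard point of the statement.

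The paper avoids the issue entirely with two ingredients you do not use. First, for a generalized Cohen--Macaulay ring the Lyubeznik table is determined by the first row $(\lambda_{0,j})_j$ (by the structure described in \`Alvarez Montaner--Yanagawa, Subsection 4.3, e.g. $\lambda_{d-i,d}=\lambda_{0,i+1}$), so one only needs to match $\lambda_{0,j}$ for all $j$ --- there is no need for your separate, and unresolved, $j=d$ analysis of $\Ext^{n-i}_R(\omega_{R/I},\omega_R)$. Second, and decisively, both $R/I$ and $R/\init_\prec(I)$ are \emph{cohomologically full} by \cite[Proposition~3.3]{C-V}, and for cohomologically full graded rings \cite[Proposition~4.11]{D-S-M} gives $\lambda_{0,j}(R/I)=\dim_K[H^j_\fm(R/I)]_0$ outright, with no Frobenius-stable subspace to analyze. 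Combined with the equality of local cohomology Hilbert functions from \cite[Theorem~1.3]{C-V}, this finishes the proof. You should replace the semicontinuity heuristic with the cohomologically full machinery; without it, the reverse inequality is not established.
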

\begin{proof}
Since $R/I$ is generalized Cohen-Macaulay so is  $R/\mathrm{in}_{\prec}(I)$ by \cite[Corollary 2.11]{C-V}. Therefore it is enough to show that $\lambda_{0,j}(R/I)=\lambda_{0,j}(R/\mathrm{in}_{\prec}(I))$ for all $j$ (see \cite[Subsection 4.3]{Al-Y}). By \cite[Proposition 3.3]{C-V}, both $R/\init_{\prec}(I)$ and $R/I$ are cohomologically full. So from \cite[Proposition 4.11]{D-S-M}:
\begin{eqnarray*}
\lambda_{0,j}(R/I)=\mathrm{dim}_{K}[H^{j}_{\fm}(R/I)]_{0} , \\
\lambda_{0,j}(R/\init_{\prec}(I))=\mathrm{dim}_{K}[H^{j}_{\fm}(R/\init_{\prec}(I))]_{0}.
\end{eqnarray*}
Now by \cite[Theorem 1.3]{C-V} we get the result.

%
\end{proof}

\begin{remark}\label{0top}
Let $I$ be an ideal of $R=K[x_1,\ldots ,x_n]$ such that $\init_{\prec}(I)$ is generated by monomials $u_1,\ldots ,u_r$. Suppose that $K$ has characteristic 0. Since $I$ is finitely generated, there exists a finitely generated $\BZ$-algebra $A\subset K$ such that $I$ is defined over $A$, i.e. $I'R=I$ if $I'=I\cap A[x_1,\ldots ,x_n]$. Given a prime number $p$ and a prime ideal $\fp\in\mathrm{Spec} A$ minimal over $(p)$, let $Q(\fp)$ denote the field of fractions of $A/\fp$ (note that $Q(\fp)$ has characteristic $p$), $R(\fp)=Q(\fp)[x_1,\ldots ,x_n]$ and $I(\fp)=I'R(\fp)$. We call the objects $R(\fp),I(\fp),R(\fp)/I(\fp)$ reductions mod $p$ of $R,I,R/I$, and by abusing notation we denote them by $R_p,I_p,R_p/I_p$.

Seccia proved in \cite{Se} that 
\[\init_{\prec}(I_p)=\init_{\prec}(I)_p\]
for any reduction mod $p$ if $p$ is a large enough prime number, 
i.e. $\init_{\prec}(I_p)$ is generated by $u_1,\ldots ,u_r$. 
\end{remark}

\begin{remark}
Let $A$ be a Noetherian ring of dimension $d$. The ring $A$ is said to be connected in codimension 1 if $\Spec A\setminus V(\mathfrak{a})$ is connected whenever $\dim A/\mathfrak{a}<d-1$ (here $V(\mathfrak{a})$ denotes the set of primes containing $\mathfrak{a}$). A result of Hartshorne \cite[Proposition 1.1]{Ha} implies that the dual graph of $A$ is connected if and only if $A$ is connected in codimension 1.
\end{remark}

\begin{prop}
Let $I$ be a homogeneous ideal of $R$ such that $\mathrm{in}_{\prec}(I)$ is radical. Then:
\begin{enumerate}
\item $\mathrm{Proj} R/I$ is connected if and only if $\mathrm{Proj} R/\init_{\prec}(I)$ is connected.
\item The  dual graph of $R/I$ is connected if and only if the dual graph of $R/\init_{\prec}(I)$ is connected.
\end{enumerate}
\end{prop}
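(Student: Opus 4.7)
My plan is to reduce both statements to results already obtained in the paper.

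For (1), I apply \cite[Theorem 1.3]{C-V}, which guarantees that $R/I$ and $R/\init_{\prec}(I)$ share the Hilbert function of every local cohomology module $H^i_{\fm}(-)$. For $A\in\{R/I,R/\init_{\prec}(I)\}$, the canonical four-term exact sequence
\[
0\lo H^0_{\fm}(A)\lo A\lo \bigoplus_{d\in\BZ} H^0(\mathrm{Proj}\,A,\mathcal{O}(d))\lo H^1_{\fm}(A)\lo 0
\]
yields in degree zero
\[
\dim_K H^0(\mathrm{Proj}\,A,\mathcal{O})\;=\;1-\dim_K[H^0_{\fm}(A)]_0+\dim_K[H^1_{\fm}(A)]_0,
\]
so this integer is the same for both choices of $A$. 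Over $\overline{K}$ it counts the geometric connected components of $\mathrm{Proj}\,A$, and the condition of being Proj-connected therefore transfers between $R/I$ and $R/\init_{\prec}(I)$ — using that on the Stanley-Reisner side the $K$- and geometric connectedness of $\mathrm{Proj}$ coincide, and reading off the analogous statement for $R/I$ from the equality of dimensions.

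For (2), I combine Corollary \ref{c:lyu} with Zhang's theorem recalled before the proposition, which says that $\lambda_{d,d}(A\otimes_K\overline{K})$ equals the number of connected components of the dual graph of $A\otimes_K\overline{K}$, where $d=\dim A$. Via Remark \ref{0top} together with Seccia's compatibility $\init_{\prec}(I_p)=\init_{\prec}(I)_p$ for $p\gg 0$, both the hypothesis that $\init_{\prec}(I)$ is radical and the dual-graph structure descend to reductions mod $p$, so it suffices to treat $K$ of positive characteristic. Corollary \ref{c:lyu} then supplies $\lambda_{d,d}(R/I)\leq \lambda_{d,d}(R/\init_{\prec}(I))$, and since $\lambda_{d,d}\geq 1$ this settles the implication ``dual graph of $R/\init_{\prec}(I)$ connected $\Rightarrow$ dual graph of $R/I$ connected''.

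The hard part will be the converse implication of (2), for which the inequality in Corollary \ref{c:lyu} points the wrong way. To close this I would use a lifting argument through the flat family $P/\hom_{w}(I)\to \Spec K[t]$ of \cite[Proposition 2.4]{C-V}: a splitting $\init_{\prec}(I)=J_1\cap J_2$ of monomial ideals with $\mathrm{ht}(J_1+J_2)\geq n-d+2$ (witnessing a disconnection of the dual graph of $R/\init_{\prec}(I)$) should lift through the flat family to a decomposition $I=L_1\cap L_2$ with $\mathrm{ht}(L_1+L_2)\geq n-d+2$, disconnecting the dual graph of $R/I$. Making this lifting rigorous, in the spirit of semicontinuity of connectedness properties in a flat proper family with reduced special fibre, is the principal technical point of the proof.
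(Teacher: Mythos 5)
Your treatment of part (1) is essentially the paper's: both arguments rest on the equality of the graded local cohomology Hilbert functions from \cite[Theorem 1.3]{C-V} together with the fact that for a reduced graded ring connectedness of $\mathrm{Proj}$ is read off from $[H^1_{\mathfrak m}(-)]_0$ (and $[H^0_{\mathfrak m}(-)]_0$ vanishes). Your use of the four-term sequence is just a more explicit phrasing of the same computation. Likewise, your handling of the implication ``dual graph of $R/\mathrm{in}_\prec(I)$ connected $\Rightarrow$ dual graph of $R/I$ connected'' in positive characteristic (Corollary~\ref{c:lyu} $+$ Zhang's $\lambda_{d,d}$ interpretation, and $\lambda_{d,d}\geq 1$) is exactly the paper's argument.

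There are two genuine problems with the rest. First, you identify as ``the hard part'' the implication ``dual graph of $R/I$ connected $\Rightarrow$ dual graph of $R/\mathrm{in}_\prec(I)$ connected'' and propose to prove it by lifting a monomial decomposition $\mathrm{in}_\prec(I)=J_1\cap J_2$ through the flat family to a decomposition of $I$. This direction is precisely the content of \cite{Va} (Gr\"obner degeneration preserves connectedness in codimension one), which the paper simply cites and which holds with no hypothesis on $\mathrm{in}_\prec(I)$; you should cite it rather than reprove it. Moreover the lifting you sketch is not obviously available: the minimal primes of the special fibre $R/\mathrm{in}_\prec(I)$ need not be in bijection with, nor lift to, minimal primes of the generic fibre $R/I$ (the special fibre of a flat family typically acquires \emph{more} components, not fewer), so ``a splitting of $\mathrm{in}_\prec(I)$ should lift to a splitting of $I$'' is exactly the wrong direction for a semicontinuity argument. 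As sketched this is a gap, and the argument in \cite{Va} does not proceed this way.

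Second, for the reduction to positive characteristic you assert that ``the dual-graph structure descends to reductions mod $p$.'' This is stronger than what is needed and is not immediate (one would have to argue that the whole primary decomposition, the heights, and the pairwise sums behave well mod $p$ for $p\gg0$). The paper instead reduces mod $p$ a single \emph{witness} of disconnection: if $R/I$ were not connected in codimension one, one picks ideals $H\supsetneq I$, $J\supsetneq I$ with $H\cap J=I$ and $\dim R/(H+J)<\dim R/I -1$, and then observes that this specific configuration (together with $\mathrm{in}_\prec(I_p)=\mathrm{in}_\prec(I)_p$ via Remark~\ref{0top}) persists for $p\gg 0$; combined with the already-proved positive-characteristic case and the fact that the dual graph of a Stanley--Reisner ring is characteristic-independent, this yields the contradiction. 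You should replace your global descent claim by this localized argument, and apply it only to the ``if'' direction of (2); the ``only if'' direction needs no characteristic reduction at all once one invokes \cite{Va}.
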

\begin{proof}
The ``only if" parts hold without the assumption that $\init_{\prec}(I)$ is radical and they have been proved in \cite{Va}. So we will concentrate on the ``if" parts.

Since computing initial ideal, as well as the connectedness properties concerning $R/\init_{\prec}(I)$, are not affected extending the field, while the connectedness properties concerning $R/I$ follow from the corresponding connectedness properties of $R/I\otimes_K\overline{K}$, it is harmless to assume that $K$ is algebraically closed. 
Under this assumption, if $J\subset R$ is a homogeneous radical ideal, we have that:
\begin{itemize}
\item[(a)] $\mathrm{Proj} R/J$ is connected if and only if $H^1_{\fm}(R/J)_0=0$.
\item[(b)] The dual graph of $R/J$ is connected if and only if $\lambda_{\dim R/J,\dim R/J}(R/J)=1$ by the main theorem of \cite{Zh}.
\end{itemize}
Under our hypothesis $I$ is radical, so (1) follows at once from (a) and the fact that the Hilbert function of the local cohomology modules of $R/I$ is bounded above by that of the ones of $R/\init_{\prec}(I)$ (in this case we even have equality by \cite{C-V}). Concerning the ``if-part'' of (2), since $\lambda_{\dim R/I,\dim R/I}(R/I)\neq 0$ in any case, if $K$ has positive characteristic it follows from (b) and Corollary \ref{c:lyu}. So, assume that $K$ has characteristic 0. If, by contradiction, $R/I$ were not connected in codimension 1, there would be two ideals $H\supsetneq I$ and $J\supsetneq I$ such that $H\cap J=I$ and $\dim R/(H+J)<\dim R/I-1$ (see \cite[Lemma 19.1.15]{B-S}). By Remark \ref{0top}, it is not difficult to check that we can choose a prime number $p\gg 0$ such that $H_p\supsetneq I_p$ and $J_p\supsetneq I_p$, $H_p\cap J_p=I_p$, $\dim R_p/(H_p+J_p)<\dim R_p/I_p-1$ and $\init_{\prec}(I_p)=\init_{\prec}(I)_p$ (for instance, to compute the intersection of two ideals amounts to perform a Gr\"obner basis calculation). Clearly the dual graph of a Stanley-Reisner ring does not depend on the characteristic of the base field. So the dual graph of $R_p/\init_{\prec}(I_p)$ would be connected but that of $R_p/I_p$ would be not, and this contradicts the fact that we already proved the result in positive characteristic.
\end{proof}

\section{CCM Simplicial Complexes}

Let $\Delta$ be a simplicial complex on the vertex set $[n]=\{1,...,n\}$. We denote the Stanley-Reisner ring $R/I_{\Delta}$ by $K[\Delta]$. See \cite{St} for generalities on these objects.
The aim of this section is to examine the CCM property for the Stanley-Reisner rings $K[\Delta]$, especially when $\Delta$ has dimension 2. 

Recall that a $\BN^n$-graded $R$-module $M$ is {\it squarefree} if, for all $\alpha=(\alpha_1,\ldots ,\alpha_n)\in\BN^n$, the multiplication by $x_j$ from $M_{\alpha}$ to $M_{\alpha+e_j}$ is bijective whenever $\alpha_j\neq 0$. It turns out that $K[\Delta]$, $I_{\Delta}$ and $\Ext^i_R(K[\Delta],\omega_R)$ are squarefree modules by \cite{Yan}.

\begin{lem}\label{2}
Let $M$ be a nonzero squarefree module. If $M_{0}=0$, then $\mathrm{depth}M>0$.
\end{lem}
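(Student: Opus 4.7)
The plan is to show directly that no nonzero element of $M$ is annihilated by the maximal ideal $\fm=(x_1,\ldots,x_n)$, which is equivalent to $\Hom_R(R/\fm,M)=0$ and hence to $\depth M>0$.

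Since $M$ is $\BN^n$-graded and $\fm$ is $\BN^n$-homogeneous, it suffices to rule out a nonzero multigraded element $m\in M_{\al}$ satisfying $x_j m=0$ for every $j\in [n]$: if such an element existed in the ungraded setting, one could pass to any of its nonzero multigraded components.

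Given such a hypothetical $m$, I would exploit the squarefree structure degree by degree. For any index $j$ with $\al_j\neq 0$, the defining property of a squarefree module forces multiplication by $x_j$ from $M_{\al}$ to $M_{\al+e_j}$ to be bijective, and in particular injective, so $x_j m=0$ would give $m=0$, contradicting $m\neq 0$. Hence $\al_j=0$ for every $j$, so $\al=0$ and $m\in M_0$. But $M_0=0$ by hypothesis, yielding the final contradiction, and concluding that $\depth M>0$.

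There is no substantive obstacle here: the argument is essentially a one-line application of the injectivity built into the definition of a squarefree module, once one has translated $\depth M>0$ into the nonexistence of multigraded $\fm$-torsion elements.
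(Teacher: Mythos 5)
Your proof is correct and is essentially the same argument as the paper's: both reduce to a multigraded element annihilated by every $x_j$, use the injectivity built into the definition of squarefreeness to force its degree to be $0$, and then contradict $M_0=0$. The only cosmetic difference is that you phrase $\depth M>0$ via $\Hom_R(R/\fm,M)=0$ directly, while the paper passes through $\fm\in\Ass M$; these are the same fact.
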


\begin{proof}
Assume, by way of contradiction, that $\mathrm{depth} M=0$. Then $\fm\in \mathrm{Ass}M$. So there exist $\alpha=(\alpha_1,\ldots ,\alpha_n) \in \mathbb{N}^{n}$ and $0\neq u \in M_{\alpha}$ such that $\fm=\mathrm{Ann}(u)$. So for $j=1,...,n$, $x_{j}\cdot u=0$. It follows that the multiplication map on $M_{\alpha}$ by $x_{j}$ is not injective for all $j$. So, because $M$ is a squarefree module, $\alpha=0$ and $u \in M_{0}=0$, a contradiction. Hence $\mathrm{depth} M>0$.
\end{proof}

\begin{lem}\label{1}
For any homogeneous ideal $I\subset R$, for all $i<3$ the $R$-module $\mathrm{Ext}^{n-i}_{R}(\mathrm{Ext}^{n-\dim R/I}_{R}(R/I,R),R)$ has finite length.
\end{lem}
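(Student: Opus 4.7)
Write $d = \dim R/I$ and set $M := \Ext^{n-d}_R(R/I, R)$; since $\omega_R$ is a degree shift of $R$, working with $R$ instead of $\omega_R$ does not affect the finite length conclusion. The plan is to prove that for $i < 3$ (implicitly assuming $d \geq 3$, the case relevant to the $2$-dimensional simplicial complexes treated in this section), the support of $N_i := \Ext^{n-i}_R(M, R)$ is contained in $\{\fm\}$; being a finitely generated graded module, $N_i$ then has finite length.

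Fix a non-maximal homogeneous prime $\fp \subset R$ and set $c = \dim R/\fp$. Since $R_\fp$ is regular of dimension $n-c$, $(N_i)_\fp = \Ext^{n-i}_{R_\fp}(M_\fp, R_\fp) = 0$ automatically when $c > i$, leaving only the cases $(i,c) \in \{(1,1), (2,1), (2,2)\}$. In each of these, local duality on $R_\fp$ identifies $(N_i)_\fp$, up to Matlis duality and an appropriate twist, with the local cohomology module $H^{i-c}_{\fp R_\fp}(M_\fp)$, so the task reduces to verifying $H^{i-c}_{\fp R_\fp}(M_\fp) = 0$ in each case.

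The key input is a classical theorem of Schenzel~\cite{S}: the ``canonical module'' $M = \Ext^{n-d}_R(R/I, \omega_R)$ satisfies Serre's condition $(S_2)$, and its associated primes are exactly the $d$-dimensional associated primes of $R/I$. By the catenarity of $R$, $M$ is then equidimensional of dimension $d$, with $\dim M_\fp = d - c$ at every $\fp \in \Supp M$. For $d \geq 3$ and $c \leq 2$, $(S_2)$ yields $\depth M_\fp \geq \min(2, d-c)$, which is at least $1$ in the cases $(1,1)$ and $(2,2)$ (giving $H^0_{\fp R_\fp}(M_\fp) = 0$) and at least $2$ in the case $(2,1)$ (giving $H^1_{\fp R_\fp}(M_\fp) = 0$), as required. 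The main technical obstacle is the appeal to Schenzel's $(S_2)$ theorem, which is classical but nontrivial; a secondary subtlety is the dimension computation $\dim M_\fp = d - c$, which uses catenarity and the description of $\Ass M$.
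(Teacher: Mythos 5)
Your proof is correct and follows essentially the same strategy as the paper: show that $N_i=\Ext^{n-i}_R(M,R)$ is supported only at $\fm$ by localizing at a non-maximal homogeneous prime $\fp$ and invoking Schenzel's result that the canonical module $M=\Ext^{n-d}_R(R/I,\omega_R)$ satisfies $(S_2)$ with $\Ass M = \Assh R/I$. The paper phrases the vanishing as $\Ext^{n-i}_{R_\fp}(M_\fp,R_\fp)=0$ using $n-i\geq \Ht\fp-1$ and ``$M_\fp$ has depth at least $2$,'' whereas you pass through local duality over $R_\fp$ and split into the cases $(i,c)\in\{(1,1),(2,1),(2,2)\}$; these are the same computation. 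One small difference worth noting: the paper reduces explicitly to the unmixed case before localizing, while you invoke $\Ass M = \Assh R/I$ directly; both work. Your case analysis is actually a bit more careful than the paper's: for a prime $\fp$ with $\dim R/\fp=2$ and $d=3$, the module $M_\fp$ is $1$-dimensional, so ``depth at least $2$'' (as the paper writes) cannot literally hold — only depth $\geq 1$ is available via $(S_2)$, and indeed only depth $\geq 1$ is needed since one is then looking at the top Ext; you correctly observe that $\min(2,d-c)$ suffices in each case. Finally, you are right to flag the implicit hypothesis $d\geq 3$: the lemma is false as stated for $d=1,2$ (e.g. $R=K[x,y]$, $I=(xy)$, $d=1$, $i=1$ gives $\Ext^{1}_R(\Ext^1_R(R/I,R),R)\cong R/I$, which is not of finite length), and the paper also tacitly assumes this since the application is to $3$-dimensional Stanley--Reisner rings.
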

\begin{proof}
If $\left(\cap_{i=1}^r\fq_i\right)\cap \left(\cap_{j=1}^s\fq_j'\right)$ is an irredundant primary decomposition of $I$ with $\dim R/\fq_i=\dim R/I$ and $\dim R/\fq'_j>\dim R/I$, one has 
\[\mathrm{Ext}^{n-\dim R/I}_{R}(R/I,R)\cong \mathrm{Ext}^{n-\dim R/I}_{R}(R/\cap_{i=1}^r\fq_i,R).\]
 So we can assume that $\dim R/\fp=\dim R/I$ for all $\fp\in\Ass R/I$.

Let $\fp\neq \fm$ be a homogeneous prime ideal of $R$ containing $I$, and set $M_{i}=\mathrm{Ext}^{n-i}_{R}(\mathrm{Ext}^{n-\dim R/I}_{R}(R/I,R),R)$.
We have: 
\[(M_{i})_{\fp}=\mathrm{Ext}^{\mathrm{ht}(\fp)-(i-n+\mathrm{ht}(\fp))}_{R_{\fp}}(\mathrm{Ext}^{\mathrm{ht}(\fp)-(\dim R_{\fp}/IR_{\fp})}_{R_{\fp}}(R_{\fp}/IR_{\fp},R_{\fp}),R_{\fp}).\]
Since $i-n+\mathrm{ht}(\fp)\leq 1$ by the assumptions and $\mathrm{Ext}^{\mathrm{ht}(\fp)-(\dim R_{\fp}/IR_{\fp})}_{R_{\fp}}(R_{\fp}/IR_{\fp},R_{\fp}),R_{\fp})$ has depth at least 2 by \cite[Proposition 2.3]{S} we have $(M_{i})_{\fp}=0$.

%
\end{proof}

\begin{cor}\label{3}
Let $\Delta$ be a $2$-dimensional simplicial complex. Then $K[\Delta]$ is CCM if and only if $\lambda_{2,3}(K[\Delta])=0$.
\end{cor}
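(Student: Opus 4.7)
The plan is to use the characterization of CCM recalled in the proof of Corollary~\ref{c:CCM}: since $\dim K[\Delta]=3$, $K[\Delta]$ is CCM if and only if the three modules $M_i:=\Ext^{n-i}_R(\omega_{K[\Delta]},\omega_R)$, $i=0,1,2$, all vanish, where $\omega_{K[\Delta]}:=\Ext^{n-3}_R(K[\Delta],\omega_R)$. Each $M_i$ is $\BN^n$-graded squarefree by \cite{Yan}, which is what will let Lemma~\ref{2} kick in.

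The first step I would carry out is to observe that $M_0$ and $M_1$ vanish for any $2$-dimensional $\Delta$. By Schenzel's \cite[Proposition~2.3]{S}, applied at the maximal ideal $\fm$ (which is the same input already driving Lemma~\ref{1}), one has $\depth \omega_{K[\Delta]}\geq 2$. Graded local duality over $R$ identifies $M_i$ with the Matlis dual of $H^i_{\fm}(\omega_{K[\Delta]})$, so $M_0=M_1=0$ directly. After this observation the CCM condition collapses to $M_2=0$.

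To finish I would combine Lemmas~\ref{1} and~\ref{2}. By Lemma~\ref{1} the module $M_2$ has finite length, so if $M_2\neq 0$ then $\depth M_2=0$, and Lemma~\ref{2} forces $(M_2)_0\neq 0$; conversely, if $(M_2)_0=0$ then Lemma~\ref{2} would give $\depth M_2>0$, which is compatible with finite length only if $M_2=0$. Hence $M_2=0$ iff $(M_2)_0=0$, and Yanagawa's formula \cite[Corollary~3.10]{Y} identifies $\dim_K (M_2)_0$ with $\lambda_{2,3}(K[\Delta])$, yielding the desired equivalence. The only step that needs care is the automatic vanishing of $M_0,M_1$, but this is a direct consequence of the depth-$2$ estimate on canonical modules already invoked in Lemma~\ref{1}.
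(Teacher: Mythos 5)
Your proof is correct and takes essentially the same route as the paper: both reduce CCM to $M_2=0$ by noting $M_0=M_1=0$ (via the $(S_2)$/depth-$\ge 2$ property of $\omega_{K[\Delta]}$ from Schenzel's Proposition~2.3), then use Lemma~\ref{1} to get finite length, squarefreeness (Lemma~\ref{2}) to reduce to the degree-$0$ piece, and Yanagawa's \cite[Corollary~3.10]{Y} to identify that piece with $\lambda_{2,3}(K[\Delta])$. You spell out the invocation of Lemma~\ref{2} slightly more explicitly than the paper, which just asserts the squarefree reduction, but the argument is identical in substance.
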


\begin{proof}
Since $\mathrm{Ext}^{n-3}_{R}(K[\Delta],\omega_{R})$ satisfy Serre's condition $(S_{2})$ by \cite[Proposition 2.3]{S}, it is enough to show that $\mathrm{Ext}^{n-2}_{R}(\mathrm{Ext}^{n-3}_{R}(K[\Delta],\omega_{R}),\omega_{R})=0$. By Lemma \ref{1} $\mathrm{Ext}^{n-2}_{R}(\mathrm{Ext}^{n-3}_{R}(K[\Delta],\omega_{R}),\omega_{R})$ has finite length; so, since it is a squarefree module,
 \[\mathrm{Ext}^{n-2}_{R}(\mathrm{Ext}^{n-3}_{R}(K[\Delta],\omega_{R}),\omega_{R})=0\iff \mathrm{Ext}^{n-2}_{R}(\mathrm{Ext}^{n-3}_{R}(K[\Delta],\omega_{R}),\omega_{R})_0=0.\]
 We conclude because $\lambda_{2,3}(K[\Delta])=\mathrm{Ext}^{n-2}_{R}(\mathrm{Ext}^{n-3}_{R}(K[\Delta],\omega_{R}),\omega_{R})_0$ by \cite[Corollary 3.10]{Y}.
%
\end{proof}

\begin{remark}
If $\Delta$ is a $(d-1)$-dimensional simplicial complex, it is still true that if $K[\Delta]$ is CCM, then $\lambda_{j,d}(K[\Delta])=0$ for all $j< d$. The converse, however, is not true as soon as $\dim(\Delta) >2$:

Let  $R=K[x_1,...,x_6]$ and $I$ be the monomial ideal of $R$ generated by
\[x_1x_2x_3x_4, x_1x_3x_4x_5, x_1x_2x_3x_6, x_1x_2x_5x_6, x_1x_4x_5x_6 ~ \text{and}~ x_3x_4x_5x_6.\]
The ring $R/I$ has a trivial Lyubeznik table but it is not CCM. Here $I$ is the Stanley-Reisner ring of a 3-dimensional simplicial complex.
\end{remark}

\begin{prop}\label{p:simply}
Let $\Delta$ be a $2$-dimensional simplicial complex such that $H_1(\Delta;K)$ vanishes. Then  $K[\Delta]$ is CCM.
\end{prop}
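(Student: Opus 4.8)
The plan is to use Corollary \ref{3}, which reduces the claim to showing $\lambda_{2,3}(K[\Delta])=0$. So the whole problem becomes a computation of a single Lyubeznik number of a $2$-dimensional Stanley--Reisner ring under the hypothesis that $H_1(\Delta;K)=0$. I would first recall that $\lambda_{2,3}(K[\Delta])=\dim_K \Ext^{n-2}_R(\Ext^{n-3}_R(K[\Delta],\omega_R),\omega_R)_0$ (as used in Corollary \ref{3}), and then try to express this graded piece in terms of the reduced simplicial (co)homology of $\Delta$ and its links, via Hochster-type or Gräbe-type formulas for the local cohomology of $K[\Delta]$ together with local duality. An alternative, probably cleaner route is to invoke known topological descriptions of the Lyubeznik numbers of Stanley--Reisner rings: for a $(d-1)$-dimensional complex, $\lambda_{i,j}(K[\Delta])$ can be read off from the reduced (co)homology of $\Delta$ and of the punctured geometric realizations $|\Delta|\setminus\{p\}$, or equivalently from local cohomology of the Alexander dual; in the $d=3$ case $\lambda_{2,3}$ should be governed precisely by $H_1(|\Delta|;K)$ (or $H^1$), so that its vanishing forces $\lambda_{2,3}=0$.

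Concretely, here is the route I would carry out. Step one: by Corollary \ref{3} it suffices to prove $\lambda_{2,3}(K[\Delta])=0$. Step two: use the identification $\lambda_{2,3}(K[\Delta]) = \dim_K H^2_{\fm}(\Ext^{n-3}_R(K[\Delta],\omega_R))_0$ or, more directly, relate $\lambda_{i,j}$ of a Stanley--Reisner ring to the $\BZ^n$-graded (indeed, squarefree) structure of the iterated Ext modules, invoking Yanagawa's theory (\cite{Y}, \cite{Yan}) already cited in the paper: everything in sight is a squarefree module, and its degree-$0$ part is a topological invariant of $\Delta$. Step three: compute that degree-$0$ part. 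The module $\Ext^{n-3}_R(K[\Delta],\omega_R)$ is (up to shift) a canonical-type module of the ``top-dimensional part'' of $K[\Delta]$, and by Hochster's formula its local cohomology in degree $0$ is built from $\tilde H_i(\Delta;K)$; the outer $\Ext^{n-2}$ then contributes exactly when $\tilde H_1(\Delta;K)\neq 0$. So $H_1(\Delta;K)=0$ kills $\lambda_{2,3}$.

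The main obstacle I anticipate is pinning down the exact combinatorial/topological formula for $\lambda_{2,3}(K[\Delta])$ and making sure the bookkeeping with shifts and Matlis/local duality is correct — in particular, accounting for the possibility that $\Delta$ is not pure, since Lemma \ref{1} and Corollary \ref{3} are stated for arbitrary $2$-dimensional complexes, whereas the cleanest Hochster-type statements are for pure complexes. To handle non-purity I would, as in the proof of Lemma \ref{1}, pass to the intersection of the top-dimensional (i.e. $\dim R/I_\Delta$-dimensional) primary components, note that $\Ext^{n-3}_R(K[\Delta],\omega_R)$ only sees the $3$-dimensional (as a ring, $2$-dimensional as a complex) part of $\Delta$, and observe that discarding lower-dimensional faces does not create new $1$-cycles (or, if it could, argue that the relevant homology that survives is a quotient/subquotient of $H_1(\Delta;K)$, still $0$). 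Once the correct formula ``$\lambda_{2,3}(K[\Delta]) = \dim_K \tilde H^1(\Delta;K)$'' (or its homological twin) is in hand, the Proposition is immediate.

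A remark worth including: this also shows the hypothesis cannot be weakened to ``$\Delta$ connected'', since connectedness only controls $\lambda_{1,2}$ / $H_0$, not $H_1$; and combined with the fact that a simply connected $2$-complex has $H_1(\Delta;\BZ)=0$ hence $H_1(\Delta;K)=0$ for every field $K$, we recover the statement announced in the introduction that simple connectivity of a $2$-dimensional $\Delta$ implies $K[\Delta]$ is CCM over any field.
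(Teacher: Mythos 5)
Your reduction to Corollary \ref{3} is right, and so is the instinct that Yanagawa's squarefree machinery is the tool, but the pivotal claim you lean on --- that one should have an \emph{equality} of the form $\lambda_{2,3}(K[\Delta])=\dim_K \widetilde H^1(\Delta;K)$ --- is false, and the paper itself shows it: Example \ref{e:simply} is a $2$-dimensional complex with $K[\Delta]$ CCM (so $\lambda_{2,3}=0$ by Corollary \ref{3}) yet $H_1(\Delta;K)\neq 0$. The link between $\lambda_{2,3}$ and $\widetilde H_1(\Delta;K)$ is genuinely one-directional, and a proof that treats it as a formula cannot be correct. The subtle point you miss is \emph{why} it is one-directional: $\lambda_{2,3}$ is a degree-$0$ graded piece of an iterated Ext, and Hochster's formula controls only the degree-$0$ part of the \emph{inner} Ext, not of the double Ext; passing from the vanishing of the former to the vanishing of the latter is exactly where an extra argument is needed.

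The paper's proof supplies that missing step. First it uses the Lyubeznik-table duality for Stanley--Reisner rings, $\lambda_{2,3}(K[\Delta])=\lambda_{0,2}(K[\Delta])$ (from \cite[Remark 2.3]{Al-Y}), so the target becomes $\operatorname{Ext}^{n}_{R}(\operatorname{Ext}^{n-2}_{R}(K[\Delta],\omega_{R}),\omega_{R})_0=0$. Then Hochster's formula gives $\operatorname{Ext}^{n-2}_{R}(K[\Delta],\omega_{R})_{0}=0$ from $H_1(\Delta;K)=0$. The key closing move is Lemma \ref{2}: a nonzero squarefree module whose degree-$0$ piece vanishes has positive depth. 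Hence $\operatorname{Ext}^{n-2}_{R}(K[\Delta],\omega_{R})$, being squarefree with vanishing degree-$0$ component, either is zero or has positive depth --- and in both cases $\operatorname{Ext}^{n}_{R}(-,\omega_{R})$ of it vanishes (the top Ext against $\omega_R$ is dual to $H^0_{\mathfrak m}$). That yields $\lambda_{0,2}=0$, hence $\lambda_{2,3}=0$, hence CCM by Corollary \ref{3}. In short: you would need to replace your hoped-for formula by (a) the $\lambda_{2,3}=\lambda_{0,2}$ duality and (b) Lemma \ref{2}, which is the actual engine. Your closing remark about simple connectivity is fine, but it rides on the corrected implication, not on an equality.
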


\begin{proof}
Since $H_1(\Delta;K)=0$, by Hochster formula we get $\mathrm{Ext}^{n-2}_{R}(K[\Delta],\omega_{R})_{0}=0$. If $ \mathrm{Ext}^{n-2}_{R}(K[\Delta],\omega_{R})\neq 0$, since it is a squarefree module it has positive depth by Lemma \ref{2}.

So, in any case, $\mathrm{Ext}^{n}_{R}(\mathrm{Ext}^{n-2}_{R}(K[\Delta],\omega_{R}),\omega_{R})=0$, and hence 
 \[\lambda_{0,2}(K[\Delta])=\mathrm{Ext}^{n}_{R}(\mathrm{Ext}^{n-2}_{R}(K[\Delta],\omega_{R})),\omega_{R})_{0}=0.\]
 By \cite[Remark 2.3]{Al-Y}, $\lambda_{2,3}(K[\Delta])=\lambda_{0,2}(K[\Delta])=0$. Now by Corollary \ref{3} $K[\Delta]$ is CCM.
\end{proof}

The converse of this corollary does not hold in general:

\begin{example}\label{e:simply}
Let $\Delta$ be the simplicial complex on $6$ vertices with facets $\{1,2,3\}$, $\{1,4,5\}$ and $\{3,4,6\}$. Then $K[\Delta]$ is CCM but $H_1(\Delta;K)\neq0$
\end{example}

\begin{prop}\label{p:B}
Let $\Delta$ be a $(d-1)$-dimensional Buchsbaum simplicial complex. The ring $K[\Delta]$ is CCM if and only if $H_i(\Delta;K)=0$ for all $1 \leq i < d-1$.
\end{prop}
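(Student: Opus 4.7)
The plan is to reduce the CCM condition to the vanishing of certain Lyubeznik numbers $\lambda_{i,d}(K[\Delta])$ and then identify those with the reduced homology of $\Delta$. Set $d:=\dim K[\Delta]$ and $M_j:=\Ext^{n-j}_R(K[\Delta],\omega_R)$. Since $\Delta$ is Buchsbaum, $K[\Delta]$ is generalized Cohen--Macaulay, so $M_d$ is $(S_2)$ by \cite[Proposition 2.3]{S} and Cohen--Macaulay on the punctured spectrum. Therefore $\Ext^n_R(M_d,\omega_R)=\Ext^{n-1}_R(M_d,\omega_R)=0$ automatically, and $\Ext^{n-i}_R(M_d,\omega_R)$ has finite length for every $i<d$. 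These Ext modules are squarefree by \cite{Yan}, so by Lemma \ref{2} combined with \cite[Corollary 3.10]{Y} one has $\Ext^{n-i}_R(M_d,\omega_R)=0 \iff \lambda_{i,d}(K[\Delta])=0$. Hence $K[\Delta]$ is CCM iff $\lambda_{i,d}(K[\Delta])=0$ for all $2\leq i\leq d-1$.

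To identify these numbers with reduced homology, I would first observe that by Hochster's formula combined with the Cohen--Macaulayness of the links of non-empty faces, for $j<d$ the local cohomology $H^j_\fm(K[\Delta])$ is concentrated in degree $0$ with dimension $\dim_K\tilde H_{j-1}(\Delta;K)$; dually, $M_j$ is a $K$-vector space in degree $0$ of the same dimension for $1\leq j\leq d-1$. To pass from this data to $\lambda_{i,d}$, I would invoke the biduality spectral sequence $E_2^{p,q}=\Ext^p_R(\Ext^{-q}_R(K[\Delta],\omega_R),\omega_R)=\Ext^p_R(M_{n+q},\omega_R)$, converging to $K[\Delta]$ concentrated on the diagonal $p+q=0$. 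For Buchsbaum $\Delta$ the nonzero $E_2$-entries at internal degree $0$ concentrate in the row $-q=c:=n-d$ (values $\Ext^p_R(M_d,\omega_R)_0$, with $p\in\{n,n-1\}$ vanishing by $(S_2)$) and in the column $p=n$ (values $(M_{n+q})^\vee_0$ of dimension $\tilde H_{n+q-1}(\Delta;K)$).

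A careful bookkeeping of the possible differentials between these entries shows that they all run from row to column, and convergence to zero off the diagonal $p+q=0$ forces $d_{d-k}\colon E_{d-k}^{c+k,-c}\to E_{d-k}^{n,-(c+d-k-1)}$ to be an isomorphism at internal degree $0$ for each $k\in\{1,\ldots,d-2\}$. This yields $\lambda_{d-k,d}(K[\Delta])=\dim_K\tilde H_k(\Delta;K)$, equivalently $\lambda_{i,d}(K[\Delta])=\dim_K\tilde H_{d-i}(\Delta;K)$ for $2\leq i\leq d-1$. Combined with the first step, $K[\Delta]$ is CCM iff $\tilde H_k(\Delta;K)=0$ for all $1\leq k\leq d-2$.

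The main obstacle will be the spectral-sequence bookkeeping: one must verify that for Buchsbaum $\Delta$ the only potentially nonzero $E_2$-entries at internal degree $0$ are in the described row and column, that the only non-trivial differentials that can fire are the $d_{d-k}$ above, and that convergence off the diagonal actually forces each of them to be an isomorphism (and not merely to cancel in other ways). An alternative to the explicit spectral-sequence analysis would be to invoke a general Al-Y-type symmetry $\lambda_{i,d}(K[\Delta])=\lambda_{0,d-i+1}(K[\Delta])$ for generalized Cohen--Macaulay Stanley--Reisner rings, extending \cite[Remark 2.3]{Al-Y} from $d=3$, $i=2$; together with cohomological fullness \cite[Proposition 3.3]{C-V} and \cite[Proposition 4.11]{D-S-M} giving $\lambda_{0,j}(K[\Delta])=\dim_K\tilde H_{j-1}(\Delta;K)$, this yields the same identification.
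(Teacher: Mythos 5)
Your proof is correct, and it takes a genuinely different route from the paper's. The paper treats the two implications asymmetrically: for ``CCM $\Rightarrow$ homology vanishes'' it uses the symmetry $\lambda_{0,i+1}=\lambda_{d-i,d}$ for Buchsbaum rings from \cite[Subsection 4.3]{Al-Y} together with \cite[Corollary 3.10]{Y}, while for the converse it invokes \cite[Theorem 4.9]{S-W} to identify $H^{i+1}_\fm(\Ext^{n-d}_R(K[\Delta],\omega_R))$ with $\Ext^{n-d+i}_R(K[\Delta],\omega_R)$ and then concludes by local duality and the $(S_2)$ property of the canonical module. You instead establish the single exact numerical identity $\lambda_{i,d}(K[\Delta])=\dim_K\tilde H_{d-i}(\Delta;K)$ for $2\le i\le d-1$, from which both implications follow at once by your first reduction (CCM iff $\lambda_{i,d}=0$ for $2\le i\le d-1$, using finite length on the punctured spectrum, squarefreeness via Lemma \ref{2}, and Yanagawa). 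Your main derivation of that identity, via the biduality spectral sequence $E_2^{p,q}=\Ext^p_R(\Ext^{-q}_R(K[\Delta],\omega_R),\omega_R)\Rightarrow K[\Delta]$ at internal degree $0$, is sound: I checked that the nonzero entries at degree $0$ do lie in the row $q=d-n$ (with $p\le n-2$ by $(S_2)$ and $p\ge n-d$ by grade) and the column $p=n$ (with $1\le n+q\le d-1$, noting $M_0=0$ since $\depth K[\Delta]\ge1$), that for each $i\in\{2,\dots,d-1\}$ the only differential touching $E_*^{n-i,d-n}$ or $E_*^{n,d-n-i+1}$ is $d_i$ between them, and that both must vanish at $E_\infty$ (being off the diagonal $p+q=0$), forcing $d_i$ to be an isomorphism. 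This is a self-contained replacement for the Stückrad--Vogel input. Your alternative route via the Buchsbaum symmetry $\lambda_{i,d}=\lambda_{0,d-i+1}$ is essentially the paper's forward-direction argument upgraded to an equivalence; note only that the relevant reference is \cite[Subsection 4.3]{Al-Y} rather than an extension of \cite[Remark 2.3]{Al-Y}, and that $\lambda_{0,j}=\dim_K\tilde H_{j-1}(\Delta;K)$ can be read off directly from local duality and Hochster's formula for the finite-length modules $\Ext^{n-j}_R(K[\Delta],\omega_R)$ without needing cohomological fullness.
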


\begin{proof}
 Let $K[\Delta]$ be CCM and fix $i\in \{1,\ldots ,d-2\}$. Since  $\Delta$ is Buchsbaum, $K[\Delta]$ behaves cohomologically like an isolated singularity, hence:
\[\lambda_{0,i+1}(K[\Delta])=\lambda_{d-i,d}(K[\Delta])\]
(see \cite[Subsection 4.3]{Al-Y}).
On the other hand, since the canonical module of $K[\Delta]$ is a $d$-dimensional Cohen-Macaulay module, $\lambda_{d-i,d}(K[\Delta])=0$ by \cite[Corollary 3.10]{Y}. So
\[\lambda_{0,i+1}(K[\Delta])=\mathrm{dim}_{K}\mathrm{Ext}^{n}_{R}(\mathrm{Ext}^{n-i-1}_{R}(K[\Delta],\omega_{R}),\omega_{R}) _{0}=0.\]
By local duality
 $H_{\fm}^{0}(\mathrm{Ext}^{n-i-1}_{R}(K[\Delta],\omega_{R}))_{0}=0$. Since $\mathrm{Ext}^{n-i-1}_{R}(K[\Delta],\omega_{R})$ is of finite length
\[
H_{\fm}^{0}(\mathrm{Ext}^{n-i-1}_{R}(K[\Delta],\omega_{R}))_{0}=\mathrm{Ext}^{n-i-1}_{R}(K[\Delta],\omega_{R})_{0}=0.\]
Therefore Hochster formula tells us that $H_i(\Delta;K)=0$.

Conversely, assume that $H_i(\Delta;K)=0$ for all $1 \leq i < d-1$. Then we have that $\mathrm{Ext}^{n-i-1}_{R}(K[\Delta],\omega_{R})_{0}=0$ by Hochster formula. As $\Delta$ is Buchsbaum, 
$\mathrm{Ext}^{n-i-1}_{R}(K[\Delta],\omega_{R})$ is of finite length, so 
\[
\mathrm{Ext}^{n-i-1}_{R}(K[\Delta],\omega_{R})=\mathrm{Ext}^{n-i-1}_{R}(K[\Delta],\omega_{R})_0=0 \ \ \forall \
1\leq i <d-1.
\] 
Now \cite[Theorem 4.9]{S-W} and local duality follow that for $1\leq i <d-1$,
\[
H_{\fm}^{i+1}(\mathrm{Ext}^{n-d}_{R}(K[\Delta],\omega_{R})\cong \mathrm{Ext}^{n-d+i}_{R}(K[\Delta],\omega_{R})=0.\]
Thus $K[\Delta]$ is CCM.
\end{proof}

\begin{example}
Propositions \ref{p:simply} and \ref{p:B} provide the following situation concerning CCM 2-dimensional simplicial complexes:
\begin{itemize}
\item[(i)] $H_1(\Delta;K)=0$ $\implies$ $K[\Delta]$ is CCM.
\item[(ii)] If $\Delta$ is Buchsbaum, $H_1(\Delta;K)=0$ $\iff$ $K[\Delta]$ is CCM.
\end{itemize}
Item (ii) above yields many examples of Buchsbaum 2-dimensional nonCCM simplicial complexes. We conclude this note with an example of a 2-dimensional simplicial complex which is neither Buchsbaum nor CCM: 

Let $R=K[x_1,...,x_8]$ and $\Delta$ be the simplicial complex with facets $\{x_1,x_2,x_6\}$, $\{x_2,x_6,x_4\}$, $\{x_2,x_4,x_5\}$, $\{x_2,x_3,x_5\}$, $\{x_3,x_5,x_6\}$, $\{x_1,x_3,x_6\}$, $\{x_1,x_7,x_8\}$. One can check that $\Delta$ is not Buchsbaum and $K[\Delta]$ is not CCM. Accordingly with Proposition \ref{p:simply},  $H_1(\Delta;K)\neq 0$.
\end{example}

\proof[\bf Acknowledgment]

This work was completed when the first author was visiting Department of
Mathematics of University of Genova. She wants to express her gratitude for the received hospitality.

\end{document}